\newtheorem{thm}{Theorem}[section]
\newtheorem{prop}[thm]{Proposition}
\newtheorem{cor}[thm]{Corollary}
\newtheorem{lem}[thm]{Lemma}
\theoremstyle{definition}
\newtheorem{defin}[thm]{Definition}
\theoremstyle{remark}
\newtheorem{remk}[thm]{Remark}
\newcommand{\gM}{\mathfrak{m}}		\newcommand{\gR}{\mathfrak{r}}
\newcommand{\al}{\alpha}					\newcommand{\be}{\beta}
\newcommand{\la}{\lambda}					\newcommand{\ga}{\gamma}
\newcommand{\Om}{\Omega}				\newcommand{\de}{\delta}
\newcommand{\cQ}{\mathcal{Q}}			\newcommand{\kT}{\mathcal{T}}
\newcommand{\mP}{\mathbb{P}}
\newcommand{\bP}{\mathbf{P}}		
\newcommand{\oK}{\bar{K}}				\newcommand{\oP}{\bar{P}}	
\newcommand{\obP}{\bar{\mathbf{P}}}		\newcommand{\tF}{\tilde{F}}
\newcommand{\hP}{\hat{P}}		\newcommand{\hbP}{\hat{\bP}}
\newcommand{\hH}{\hat{H}}		\newcommand{\hxi}{\hat{\xi}}	
\newcommand{\hu}{\hat{u}}		\newcommand{\hv}{\hat{v}}
\newcommand{\<}{\langle}   	\renewcommand{\>}{\rangle}
\renewcommand{\*}{\otimes}   \newcommand{\+}{\oplus}
\newcommand{\bop}{\bigoplus}
		\newcommand{\8}{\infty}
\newcommand{\bu}{{ \scriptstyle \bullet}}
\newcommand{\sh}{^\sharp}			
\newcommand{\ch}{^{\scriptscriptstyle\vee}}
\newcommand{\spe}{\supset}		\newcommand{\sbe}{\subset}
\newcommand{\pmd}{\hskip-2pt\pmod{p}}
\newcommand{\ppmd}{\hskip-2pt\pmod{p^2}}
\newcommand{\lat}{\mbox{-}\mathrm{lat}}
\newcommand{\Hom}{\mathop\mathrm{Hom}\nolimits}
\newcommand{\Ker}{\mathop\mathrm{Ker}\nolimits}
\newcommand{\Tor}{\mathop\mathrm{Tor}\nolimits}
\newcommand{\Ext}{\mathop\mathrm{Ext}\nolimits}
\newcommand{\rad}{\mathop\mathrm{rad}\nolimits}
\newcommand{\rep}{\mathop\mathrm{rep}\nolimits}
\newcommand{\rk}{\mathop\mathsf{Rk}\nolimits}
\newcommand{\AR}{Auslander-Reiten}
\begin{document}

\title[Kleinian 4-rings]{Representations and cohomologies of Kleinian 4-rings}

\author{Yuriy Drozd}
\thanks{This paper was prepared during the stay of the author at the Max-Plank-Institute for Mathematics (Bonn).}

\address{Institute of Mathematics NAS of Ukraine, Kyiv, Ukraine}
 \email{y.a.drozd@gmail.com}             
\urladdr{www.imath.kiev.ua/~drozd}

\keywords{4-rings, lattices, cohomology, \AR\ quiver, regular lattices}
 \subjclass[2010]{16E40, 16H20, 16G70}


\maketitle

\begin{abstract}
 We introduce a new class of algebras over discrete valuation rings, called \emph{Kleinian 4-rings}, 
 which generalize the group algebra of the Kleinian 4-group. For these algebras we describe the lattices
 and their cohomologies. In the case of \emph{regular lattices} we obtain an explicit form of cocycles
 defining the cohomology classes.
\end{abstract}


\section*{Introduction}
\label{intro}

Integral representations of the Kleinian 4-group $G$ (or $G$-lattices) were described by Nazarova \cite{naz22}. Another description was
proposed by Plakosh \cite{plak1}. In the papers \cite{dpl2} and \cite{regular} cohomologies of these lattices were calculated. In this paper
we consider a class of rings that generalizes group rings of the Kleinian 4-group. We call them \emph{Kleinian 4-rings}. We give a description
of lattices over such rings and calculate cohomologies of these lattices. In a special case of \emph{regular lattices} we obtain an explicit
form of cocycles defining cohomology classes. 
 
\section{Lattices over Kleinian 4-rings}
\label{sec1}

 In what follows $R$ denotes a complete discrete valuation ring with a prime element $p$, the field of fractions $Q$ and the field of residues
 $\Bbbk=R/pR$. We write $\*$ instead of $\*_R$. If $A$ is an $R$-algebra, we call an $A$-module $M$ an \emph{$A$-lattice} if it is finitely
 generated and free as $R$-module. Then we identify $M$ with its image $1\*M$ in the vector space $Q\*M$ and an element $v\in M$ with
 $1\*v\in Q\*M$. We denote by $A\lat$ the category of $A$-lattices.

 \begin{defin}
  The \emph{Kleinian 4-ring} over $R$ is the $R$-algebra $K=R[x,y]/(x(x-p),y(y-p))$.
 \end{defin}
 
 Note that if $p=2$ this is just the group algebra over $R$ of the \emph{Kleinian 4-group}  $G=\< a,b \mid a^2=b^2=1,\,ab=ba \>$.
 One has to set $x=a+1,\,y=b+1$. 
 
 One easily sees that $Q\*K$ is isomorphic to $Q^4$: just map $x$ to $\bar x=(p,p,0,0)$ and $y$ to $\bar{y}=(p,0,p,0)$. We consider $K$
 as embedded into $Q^4$ identifying $x$ with $\bar{x}$ and $y$ with $\bar{y}$. We also set $z=(p,0,0,0)\in Q^4$ (note that $z\notin K$ and
 $z^2=xy)$. The maximal ideal $\gR$ of $K$ is $(p,x,y)$ and $K/\gR\simeq\Bbbk$. Let $A=\{a\in Q^4\mid a\gR\sbe K\}$. One easily verifies that
 $A=K+Rz$ and $A/K\simeq\Bbbk$.  Hence $K$ is a \emph{Gorenstein ring} \cite[Proposition~6]{ideals}, i.e.
 $\mathrm{inj.dim}_KK=1$. Therefore, $A$ is its unique minimal over-ring and every $K$-lattice is isomorphic to a direct sum of a free $K$-module
 and an $A$-lattice (see \cite[Lemma~2.9]{qbass} or \cite[Lemma~3.2]{rejection}). Note that the ring $A$ is also local with the maximal ideal
 $\gM=(p,x,y,z)$ and $A/\gM\simeq\Bbbk$. Moreover, as the submodule of $Q^4$, $\gM=pA\sh=\rad A\sh$, where $A\sh=R^4$ is hereditary. 
 Thus $A$ is a \emph{Backstr\"om order} in the sense of \cite{rinrog}. Therefore, $A$-lattices can be described by the representations of the quiver
 \[
    \Gamma=  \vcenter{\xymatrix@R=1ex{ & {pp} \\ & {p0} \\ \bu \ar[uur] \ar[ur] \ar[dr] \ar[ddr] \\ &{0p} \\ &{00}	  }  }
\]  
 over the field $\Bbbk$.
 Namely, denote by $R_{\al\be}$, where $\al,\be\in\{0,p\}$ the $A$-lattice such that $R_{\al\be}=R$ as $R$-module, $xv=\al v$ and $yv=\be v$
 for all $v\in R_{\al\be}$. For any $A$-lattice $M$ and $\al,\be\in\{0,p\}$ set $M_{\al\be}=\{v\in M\mid xv=\al v,\,yv=\be v\}$. If $M$ is an $A$-lattice, 
 $M\sh=A\sh M$ is an $A\sh$-module, hence $M\sh=\bop_{\al,\be}M\sh_{\al\be}$. Let $V_\bu=M/\gM M$ and $V_{\al\be}=M\sh_{\al\be}/pM\sh_{\al\be}$.
 Note that $M\sh\spe M\spe \gM M= pM\sh$. So the natural maps $f_{\al\be}:V_\bu\to V_{\al\be}$ are defined and we obtain a representation $V$ of
 the quiver $\Gamma$:
   \begin{equation}\label{e11}  
     V=  \vcenter{\xymatrix@R=2ex@C=4em{ & V_{pp} \\ & V_{p0} \\ 
    V_\bu \ar[uur]|{\,f_{pp}\,} \ar[ur]|{\,f_{p0}\,} \ar[dr]|{\,f_{0p}\,} \ar[ddr]|{\,f_{00}\,} \\ & V_{0p} \\ & V_{00}	  }  }
 \end{equation}
 We denote this representation by $\Phi(M)$. It gives a functor $\Phi:A\lat\to\rep\Gamma$.
  The next result follows from \cite{rinrog}.
 
 \begin{thm}\label{t11} 
  Let $\rep_+\Gamma$ be the full subcategory of $\rep\Gamma$ consisting of such representations $V$ that all maps $f_{\al\be}$ are surjective and the map
  $f_+:V_\bu\to V_+$ is injective. The image of the functor $\Phi$ is in $\rep_+\Gamma$ and, considered as the functor $A\lat\to\rep_+\Gamma$, the functor
  $\Phi$ is an \emph{epivalence}.
 \end{thm}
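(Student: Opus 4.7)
The overall plan is to recognize Theorem \ref{t11} as a direct instance of the equivalence between lattices over a Backstr\"om order and representations of the associated separated bipartite quiver, as established in \cite{rinrog}. The preceding paragraph has already done the structural setup: $A$ is Backstr\"om with hereditary overring $A\sh=R^4$ and common ideal $\gM=pA\sh=\rad A\sh$, and the associated separated quiver has one source $\bu$ (corresponding to the unique simple $A/\gM$-module $\Bbbk$) and four sinks indexed by the four simple $A\sh/\gM$-modules, with one arrow from $\bu$ to each sink because the embedding $A/\gM=\Bbbk\hookrightarrow A\sh/\gM=\Bbbk^4$ is diagonal. Thus $\Gamma$ is that quiver.

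First I would check directly that $\Phi(M)\in\rep_+\Gamma$ for any $M\in A\lat$. Since $\gM M=pA\sh M=pM\sh$ and $pM\sh\sbe M\sbe M\sh$, the canonical map $M/\gM M\to M\sh/pM\sh$ is nothing but the inclusion $M/pM\sh\hookrightarrow M\sh/pM\sh$, giving injectivity of $f_+$. The surjectivity of each $f_{\al\be}$ follows from $M\sh=A\sh M$ by applying Nakayama's lemma to the summand $M\sh_{\al\be}$: the image of $M$ in $V_+$ generates $V_+$ as a $\Bbbk^4$-module, and applying the $(\al,\be)$-th idempotent of $A\sh/pA\sh$ extracts exactly $f_{\al\be}(V_\bu)$.

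Second I would construct a section $\Psi:\rep_+\Gamma\to A\lat$ of $\Phi$. For $V\in\rep_+\Gamma$, pick free $R$-modules $\tilde V_{\al\be}$ of rank $\dim_\Bbbk V_{\al\be}$, let $L:=\bop_{\al\be}\tilde V_{\al\be}$ be the resulting $A\sh$-lattice, and define $M\sbe L$ as the preimage under $L\twoheadrightarrow L/pL=V_+$ of $f_+(V_\bu)$. Because $A$ acts on $L/pL$ through $A/\gM=\Bbbk$ diagonally on the four summands, every $\Bbbk$-subspace of $V_+$ is automatically $A$-stable, so $M$ is an $A$-sublattice of $L$. The surjectivity hypothesis on each $f_{\al\be}$ is exactly what forces $A\sh M+pL=L$, hence $A\sh M=L$ by Nakayama, so that $L=M\sh$ and $\Phi(M)\simeq V$; this proves density of $\Phi$.

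Finally, fullness and reflection of isomorphisms reduce to lifting. A morphism $V\to V'$ in $\rep_+\Gamma$ is $\Bbbk$-linear data on each vertex; the leaf data lifts componentwise to an $A\sh$-linear map $L\to L'$ because each $\tilde V_{\al\be}$ is $R$-free, and the compatibility with $f_+$ forces this lift to restrict to $M\to M'$. Reflection of isomorphisms is then a standard Nakayama argument applied at the level of $M\sh$ and of $M/\gM M$. The main delicate point, and the reason one obtains only an \emph{epivalence} rather than a full equivalence, is that $\Phi$ is not faithful: any morphism $M\to M'$ factoring through $pM'\sh$ induces zero on both $V_\bu$ and all $V_{\al\be}$. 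Showing that this is the sole failure of faithfulness and does not spoil the reflection of isomorphism classes is the technical core of the argument, and is precisely what the Backstr\"om-order machinery of \cite{rinrog} provides.
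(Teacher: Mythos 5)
Your proposal is correct and follows essentially the same route as the paper, which simply derives the theorem from the Bäckström-order machinery of \cite{rinrog} after noting $\gM=pA\sh=\rad A\sh$. Your added direct verification (injectivity of $f_+$ from $\gM M=pM\sh\sbe M\sbe M\sh$, surjectivity of the $f_{\al\be}$ and density via Nakayama through the construction of $\Psi$, fullness by lifting leaf maps to $M\sh\to M'\sh$, and reflection of isomorphisms with kernel $\Hom_A(M,\gM N)$) is exactly the content the paper delegates to that reference and records in its remark on reconstructing $M=\Psi(V)$.
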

 
 Recall that the term \emph{epivalence} means that $\Phi$ is full, maps non-isomorphic objects to non-isomorphic
and every representation $V\in\rep_+\Gamma$ is isomorphic to some $\Phi(M)$ (then $\Phi$ maps indecomposable objects to indecomposable).  
 Actually, this $M$ can be reconstructed as follows.
 Set $d_{\al\be}=\dim V_{\al\be}$, $V_+=\bop_{\al\be}V_{\al\be}$ and $\bar{V}$ be the image of the map $V_\bu\to V_+$ with the components $f_{\al\be}$. 
 Then $V_+\simeq M\sh/pM\sh$, where $M\sh=\bop_{\al\be}R_{\al\be}^{d_{\al\be}}$. Let $\Psi(V)$ be the preimage of $V_+$ in $M\sh$. It is an $A$-lattice and 
 $\Phi(M)\simeq V$. Moreover, $M\sh=A\sh M$. Note also that the kernel of the map $\Hom_A(M,N)\to\Hom_\Gamma(\Phi(M),\Phi(N))$ coincides with
 $\Hom_A(M,\gM N)$.
  
 The quintuple $(d_\bu\mid d_{pp},d_{p0},d_{0p},d_{00})$, where $d_\bu=\dim V_\bu$, is called the \emph{vector dimension} of the representation $V$. We
 also call it the \emph{vector rank} of the lattice $M=\Psi(V)$ and denote it by $\rk M$.  For instance, $\rk R_{pp}=(1\mid1,0,0,0)$ and $\rk A=(1\mid1,1,1,1)$.
 Note that the rank of $M$ as of $R$-module equals $\sum_{\al\be}d_{\al\be}$, while $d_\bu=\dim_\Bbbk M/\gM M$.

 \begin{remk}\label{r12} 
  Note that the only indecomposable representations of $\Gamma$ that do not belong to $\rep_+\Gamma$ are ``\emph{trivial representations}'' $V^j$, where
  $j\in\{\bu,\,\al\be \mid \al,\be\in\{0,p\}\}$ such that $V^j_j=\Bbbk$ and $V^j_{j'}=0$ if $i\ne j$. Therefore, $A$-lattices are indeed classified by 
  representations of the quiver $\Gamma$.
 \end{remk}
  
 Let $\tau_K$ ($\tau_A$) denote the \emph{\AR translate} in the category $K\lat$ (respectively, $A\lat$). Recall that $\tau_KM$ for a non-projective indecomposable
 $K$-lattice $M$ is an indecomposable $K$-lattice $N$ such that there is an \emph{almost split sequence} $0\to N\to E\to M\to0$ \cite{AusIsolated}. The next result
 follows from \cite{rejection}.
 \begin{prop}\label{p13} 
  \begin{enumerate}
  \item    $\tau_KM\simeq\tau_AM$ for any indecomposable $A$-lattice $M\not\simeq A$.
  \item    $\tau_KA\simeq\gR$ and it is a unique indecomposable $A$-lattice $N$ such that $\mathrm{inj.dim}_N=1$.
  \item  $\tau_KM\simeq\Omega M$ for any $A$-lattice $M$, where $\Omega M$ denote the \emph{syzygy} of $M$
  as of $K$-module.
  \end{enumerate}  
 \end{prop}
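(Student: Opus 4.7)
The plan is to prove part (3) first from the symmetric-order structure of $K$, deduce part (2) from (3) by a direct syzygy computation, and finally obtain part (1) via the rejection theorem of \cite{rejection}.

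For (3), I would use that $K$ is a one-dimensional commutative complete local Gorenstein $R$-order (recall $\mathrm{inj.dim}_K K=1$ from the setup). Being commutative Gorenstein, $K$ is a \emph{symmetric} $R$-order: $\Hom_R(K,R)\simeq K$ as $K$-modules. Consequently the Nakayama functor $\nu$ on $K\lat$ is isomorphic to the identity, and the general Auslander--Reiten formula $\tau_K\simeq\Omega\circ\nu^{-1}$ for Gorenstein $R$-orders collapses to $\tau_K M\simeq\Omega M$ for every indecomposable non-projective $K$-lattice $M$. The unique indecomposable projective $K$-lattice is $K$ itself, and no nonzero $A$-lattice contains $K$ as a $K$-summand (by the uniqueness of the decomposition $K$-lattice $= K^n\oplus$ ($A$-lattice) noted above the proposition), so the formula applies to every indecomposable $A$-lattice.

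For (2), I would compute $\Omega A$ directly. Since $A=K+Rz$ requires both $1$ and $z$ as $K$-generators, the minimal $K$-projective cover is $\pi\colon K^2\twoheadrightarrow A$, $(a,b)\mapsto a+bz$. Its kernel consists of pairs $(a,b)\in K^2$ with $a+bz=0$, and projection to the second coordinate identifies this with $\{b\in K:bz\in K\}$. Using $K\cap Rz=Rxy$ inside $Q^4$ (from $xy=pz$) together with the $R$-basis $\{1,x,y,xy\}$ of $K$, a short calculation shows $\{b\in K:bz\in K\}=\gR$, so $\Omega A\simeq\gR$ and (3) gives $\tau_K A\simeq\gR$. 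For the uniqueness claim, I would identify $\gR$ with the canonical module $\omega_A\simeq\Hom_K(A,K)$ of the Cohen--Macaulay local ring $A$ (a direct check using the presentation $A=K^2/(xy\cdot e_1-p\cdot e_2)$ gives this isomorphism), and then invoke Bass's classical theorem that up to isomorphism there is exactly one indecomposable maximal Cohen--Macaulay $A$-module of finite injective dimension, namely $\omega_A$, with injective dimension equal to $\dim A=1$.

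For (1), I would invoke the rejection theorem of \cite{rejection} applied to the inclusion $A\lat\subset K\lat$, which is precisely the rejection of the unique indecomposable projective $K$. For an indecomposable $M\in A\lat$, part~(3) gives $\tau_K M\simeq\Omega M\in A\lat$, so the sole obstruction to the AR sequence $0\to\tau_K M\to E\to M\to 0$ of $K\lat$ being the AR sequence of $M$ in $A\lat$ is the possible presence of $K$-summands in the middle term $E$. The rejection theorem identifies the unique $M$ for which such summands appear as $M\simeq A$ — consistent with the distinguished position $\tau_K A\simeq\gR$ revealed by (2) — so for every other indecomposable $A$-lattice $M$ the sequence stays inside $A\lat$, yielding $\tau_A M\simeq\tau_K M$. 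The main technical hurdle here is precisely this exceptional-case analysis (showing that a $K$-summand appears in $E$ exactly when $M\simeq A$), which is what we import from \cite{rejection}.
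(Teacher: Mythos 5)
Your write-up is correct in substance, and it is genuinely more self-contained than the paper, which offers no argument at all beyond the single sentence that the proposition ``follows from \cite{rejection}''. You prove (3) from the symmetric-order form of the Auslander--Reiten translate (using $\Hom_R(K,R)\simeq K$, which the paper itself invokes later, so the Nakayama functor is trivial and $\tau_K\simeq\Omega$ on non-projective indecomposables), you get (2) by an explicit syzygy computation together with the Sharp--Bass characterization of maximal Cohen--Macaulay modules of finite injective dimension as direct sums of copies of $\omega_A$, and you import from \cite{rejection} only part (1), which is exactly where the genuinely order-theoretic input lies (the almost split sequence of $K\lat$ ending at $A$ is the unique one whose middle term has a free summand, so rejection of $K$ leaves all other almost split sequences intact). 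What your route buys is the explicit identification $\tau_KA\simeq\Omega A\simeq\gR\simeq A\ch\simeq\omega_A$ instead of a black-box citation; small glossed points (that $\Omega M$ of an indecomposable $A$-lattice has no free $K$-summands, hence lies in $A\lat$, and that $K$-homomorphisms between $A$-lattices are automatically $A$-linear because $pz=xy$ and lattices are torsion-free) are easy to fill in. You also read ``$\mathrm{inj.dim}=1$'' as injective dimension over $A$, which is the only reading under which the statement is true, so that is fine.

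One concrete slip: the parenthetical presentation $A\simeq K^2/(xy\,e_1-p\,e_2)$ is false, and it contradicts your own kernel computation. The kernel of $(a,b)\mapsto a+bz$ is $\{(-bz,b)\mid b\in\gR\}\simeq\gR$, which is minimally $3$-generated over $K$ (by the elements corresponding to $b=p,x,y$), whereas $K\,(xy\,e_1-p\,e_2)\simeq K$ is free cyclic; the quotient $K^2/K(xy\,e_1-p\,e_2)$ has nonzero $R$-torsion (e.g.\ the class of $xy\,e_1-x\,e_2$ is killed by $p$ but nonzero). Fortunately nothing depends on this: the identification $\gR\simeq\Hom_K(A,K)=\omega_A$ follows directly from $\Hom_K(A,K)\simeq\{q\in Q\*K\mid qA\sbe K\}=\{q\in K\mid qz\in K\}=\gR$, which is literally the computation you already carried out for $\Omega A$, and it agrees with the paper's remark that $A^1\simeq\gR\simeq A\ch$. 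Replace the faulty presentation by this one-line duality computation and the argument for (2) is complete.
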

 
  Following \cite{rogback}, we can also restore the \AR\ quiver $\cQ(A)$ of the category $A\lat$ from the \AR\ quiver $\cQ(\Gamma)$r of the category $\rep\Gamma$. 
  Recall that the quiver $\cQ(\Gamma)$ consists of the \emph{preprojective, preinjective} and \emph{regular} components. The quiver $\cQ((A)$ is obtained from
  $\cQ(\Gamma)$ by glueing the preprojective and preinjective components omitting trivial representations. The resulting \emph{preprojective-preinjective}
  component is the following:
  {\footnotesize
 \[
  \xymatrix@C=.65em@R=1.7em{ & R_{pp}^2 \ar[ddr] && R_{pp}^1 \ar[ddr] && R_{pp} \ar[ddr] && R_{pp}^{-1} \ar[ddr] && R_{pp}^{-2} \ar[ddr]
  && R_{pp}^{-3} \ar[ddr] \\
  & R_{p0}^2 \ar[dr] && R_{p0}^1 \ar[dr] && R_{p0} \ar[dr] && R_{p0}^{-1} \ar[dr] && R_{p0}^{-2} \ar[dr]
  && R_{p0}^{-3} \ar[dr] \\
 {\cdots\ \ {}} \ar[uur] \ar[ur] \ar[ddr] \ar[dr] && A^2 \ar[uur] \ar[ur] \ar[ddr] \ar[dr] && A^1 \ar[uur] \ar[ur] \ar[ddr] \ar[dr]  && A \ar[uur] \ar[ur] \ar[ddr] \ar[dr] 
  && A^{-1} \ar[uur] \ar[ur] \ar[ddr] \ar[dr] && A^{-2} \ar[uur] \ar[ur] \ar[ddr] \ar[dr] && {{}\ \ \cdots}\\
  & R_{0p}^2 \ar[ur] && R_{0p}^1 \ar[ur]  && R_{0p} \ar[ur] && R_{0p}^{-1} \ar[ur] && R_{0p}^{-2} \ar[ur]
  && R_{0p}^{-3} \ar[ur] \\
  & R_{00}^2 \ar[uur] && R_{00}^1 \ar[uur]  && R_{00} \ar[uur] && R_{00}^{-1} \ar[uur] && R_{00}^{-2} \ar[uur]
  && R_{00}^{-3} \ar[uur]
   }
 \]
 }
 Here $M^k$ denotes $\tau_K^kM$. Note that $A^1\simeq\gR\simeq A\ch$, where $M\ch=\Hom_K(M,K)$. The representations belonging to this component are
 uniquely determined by their vector-ranks. One can verify that
 \begin{align*}
  \rk A^k&=
  \begin{cases}
   (2k-1\mid k,k,k,k) &\text{if } k>0,\\
   (1-2k\mid 1-k,1-k,1-k,1-k) &\text{if } k<0;
  \end{cases} \\
  \rk R_{pp}^k&=
  \begin{cases}
   (k+1\mid \left[\frac{k}{2}\right]-(-1)^k,\left[\frac{k}{2}\right],\left[\frac{k}{2}\right],\left[\frac{k}{2}\right]) &\text{if } k>0,\\
   (-k\mid \left[\frac{1-k}{2}\right]+(-1)^k,\left[\frac{1-k}{2}\right],\left[\frac{1-k}{2}\right],\left[\frac{1-k}{2}\right]) &\text{if } k<0.
  \end{cases}
 \end{align*} 
 $\rk R_{\al\be}^k$ is obtained from $\rk R_{pp}^k$ by permutation of $d_{pp}$ with $d_{\al\be}$.
 
 The remaining (regular) components are \emph{tubes}, where $\tau_K$ acts periodically. They are parametrized by the set
 $$\mP=\{\mbox{irreducible unital polynomials } f(t)\in\Bbbk[t]\}\cup\{\8\}.$$
 Actually, it is the set of closed points of the  projective line over the field $\Bbbk$, that is of the projective scheme $\mathrm{Proj\,}\Bbbk[x,y]$.
 If $f(t)=t-\la$ ($\la\in\Bbbk$), we write $\kT^\la$ instead of $\kT^f$. 
 
 If $f\in\mP\setminus\{t,t-1,\8\}$, the corresponding tube $\kT^f$ is \emph{homogeneous}, which means that $\tau_K M\simeq M$ for all $M\in\kT^f$.
 It has the form
 \[
   \xymatrix{ T^f_1 \ar@/^/[r]	 & T^f_2 \ar@/^/[r] \ar@/^/[l] & T^f_3 \ar@/^/[r] \ar@/^/[l] & \cdots  \ar@/^/[l]} 
 \]
 and  $\rk T^f_n=(2dn\mid dn,dn,dn,dn)$, where $d=\deg f(t)$. In this diagram all maps $T^f_n\to T^f_{n+1}$ are monomorphisms with the cokernels
 $T^f_1$, while all maps $T^f_{n+1}\to T^f_n$ are epimorphisms with the kernels $T^f_1$.
 
 The \emph{exceptional tubes} $\kT^\la\ (\la\in\{0,1,\8\})$ are of he form 
  \begin{equation}\label{stube} 
  \vcenter{ \xymatrix{ T^{\la1}_1 \ar[r]  & T^{\la1}_2 \ar[r] \ar[dl] & T^{\la1}_3 \ar[r] \ar[dl]& T^{\la1}_4 \ar[r] \ar[dl] \ar[r] \ar[dl] & \cdots \ar[dl]\\
   T^{\la2}_1 \ar[r]  & T^{\la2}_2 \ar[r] \ar[ul] & T^{\la2}_3 \ar[r] \ar[ul]& T^{\la2}_4 \ar[r] \ar[ul] \ar[r] \ar[ul] & \cdots \ar[ul]} }
 \end{equation}
  Here  $\tau T^{\la1}_n=T^{\la2}_n$ and $\tau T^{\la2}_n=T^{\la1}_n$. In this diagram all maps $T^{\la i}_n\to T^{\la i}_{n+1}$ are monomorphisms
  with the cokernels $T^{\la j}_1$, where $j=i$ if $n$ is even and $j\ne i$ if $n$ is odd. All maps $T^{\la i}_{n+1}\to T^{\la j}\ (j\ne i)$ are epimorphisms
  with the kernels $T^{\la i}_1$. 
  
  For $\la=1$ we have
  \begin{align}\label{erk} 
  & \rk T^{1j}_{2m}=(2m\mid m,m,m,m) \ \text{ for both $j=1$ and $j=2$}, \notag\\
   & \rk T^{11}_{2m-1}=(2m-1\mid m,m,{m-1},{m-1}),\\
   & \rk T^{12}_{2m-1}=(2m-1\mid m-1,m-1,mm).  \notag 
  \end{align}
 The vector-ranks for the tubes $\kT^0$ and $\kT^\8$ are obtained from those for $\kT^1$ by permutation of $d_{p0}$, respectively, with $d_{00}$ and 
 with $d_{0p}$. 
 
 \section{Cohomologies}
 \label{sec2} 
 
 A Kleinian 4-ring is a \emph{supplemented $R$-algebra} in the sense of \cite[Ch.\,X]{CE} with respect to the surjection $\pi:K\to K/(x-p,y-p)\simeq R$. 
 Therefore, for any $K$-module $M$ the homologies $H_n(K,M)=\Tor_N^K(R,M)$ and cohomologies $H^n(K,M)=\Ext^n_K(R,M)$ are defined.
 Moreover, if we consider $M$ as $K$-bimodule setting $mx=my=pm$ for all $m\in M$, they coincide with the Hochschild homologies and
 cohomologies:
 \[
    H_n(K,M)\simeq HH_n(K,M)\ \text{ and } \ H^n(K,M)\simeq HH^n(K,M).
 \]
 (see \cite[Theorem~X.2.1]{CE}). 
 
\begin{remk}\label{r21} 
  We have chosen the augmentation $K\to R$ such that if $p=2$, hence $K\simeq RG$ for the Kleinian 4-group $G$, it coincides with the usual
  augmentation $RG\to R$ mapping all elements of the group to $1$. Thus in this case $H_n(K,M)=H_n(G,M)$.
 \end{remk} 
 
 \begin{prop}\label{p22}
  For every $K$-module $M$ and $n\ne0$
  \[
   xyH_n(K,M)=xyH^n(K,M)=p^2H^n(K,M)=p^2H_n(K,M)=0  
  \]
 \end{prop}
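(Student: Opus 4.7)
The plan is to reduce the four vanishing assertions to the single statement that $p^2$ annihilates $H^n(K,M)$ and $H_n(K,M)$ for $n\ne 0$, and then to prove this reduced statement by exhibiting a factorization of the endomorphism $p^2\cdot\mathrm{id}_R$ through the free $K$-module $K$ itself.

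For the reduction, I would argue that on both $\Ext^n_K(R,M)$ and $\Tor^K_n(R,M)$ the two natural $K$-module structures (through the first and through the second argument) agree because $K$ is commutative. Through the $R$-argument, the augmentation $\pi\colon K\to R$ sends both $x$ and $y$ to $p$, so $x$ and $y$ each act as multiplication by $p$ on the (co)homology groups. Consequently $xy$ acts as $p^2$, which already gives $xyH^n=p^2H^n$ and $xyH_n=p^2H_n$, and reduces the proposition to proving that $p^2$ itself kills these groups for $n\ne 0$.

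The key step is the factorization. Writing $u=x-p$ and $v=y-p$, so that $R=K/(u,v)$, a direct computation from the defining relations $x(x-p)=y(y-p)=0$ yields $u\cdot xy = x^2y-pxy = pxy-pxy = 0$ and, symmetrically, $v\cdot xy = 0$; in other words $xy$ lies in $\mathrm{Ann}_K(u)\cap\mathrm{Ann}_K(v)$. Therefore the formula $\iota(1)=xy$ defines a $K$-linear map $\iota\colon R\to K$, and composing with $\pi$ gives $\pi\iota(1) = \pi(xy) = p\cdot p = p^2$, so that $\pi\circ\iota = p^2\cdot\mathrm{id}_R$.

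Because $K$ is free as a $K$-module, $\Ext^n_K(K,M)=0$ and $\Tor^K_n(K,M)=0$ for all $n\ge 1$. By functoriality, the endomorphism $p^2\cdot\mathrm{id}_R = \pi\circ\iota$ induces maps on $H^n(K,M)$ and $H_n(K,M)$ that factor through these vanishing groups and are therefore zero. Since the induced endomorphism is precisely multiplication by $p^2$, we conclude $p^2H^n(K,M) = p^2H_n(K,M) = 0$ for $n\ne 0$, completing the proof. There is no real obstacle once one spots the distinguished element $xy\in\mathrm{Ann}_K(u)\cap\mathrm{Ann}_K(v)$ with $\pi(xy)=p^2$; the argument is simply the remark that the map $p^2\cdot\mathrm{id}_R$ is \emph{projectively trivial}, so it acts as zero on derived functors in positive degree.
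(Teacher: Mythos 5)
Your proposal is correct and follows essentially the same route as the paper: the map $\iota(1)=xy$ is exactly the paper's homomorphism $\mu\colon r\mapsto rxy$, with $\pi\iota=p^2\cdot\mathrm{id}_R$ factoring multiplication by $xy=p^2$ through $\Ext^n_K(K,M)=0$ and $\Tor^K_n(K,M)=0$. Your explicit verification that $(x-p)xy=(y-p)xy=0$ and the remark identifying the $xy$- and $p^2$-actions via commutativity are just spelled-out versions of steps the paper leaves implicit.
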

 \begin{proof}
  The map $\mu:r\mapsto rxy$ is a homomorphism of $K$-modules $R\to K$ such that $\pi\mu:R\to R$ is the multiplication by $xy$ or, the same, by $p^2$.
  Therefore, the multiplication by $xy$ or by $p^2$ in $\Ext^n_K(R,M)$ or in $\Tor_n^K(R,M)$ factors, respectively, through $\Ext^n_K(K,M)=0$ or
  through $\Tor^K_n(K,M)=0$.
 \end{proof}
  
  Note that $K\simeq \oK\*\oK$, where $\oK=R[x]/(x(x-p))$
  A projective resolution $\obP$ for $R$ as of $\oK$-module, where $xr=pr$ for all $r\in R$, is obtained if we set $\oP_n=\oK u^n$ and
 \[
    du^n=C_n(x)u^{n-1},\
	\text{where}\ C_i(x)=
  \begin{cases}
   x &\text{if $n$ is even},\\
   x-p &\text{if $n$ is odd}.
  \end{cases}
  \]
 Then $\bP=\obP\*\obP$ is a projective resolution of $R$ as of $K$-module. Here $P_n$ is the module of homogeneous polynomials of degree
 $n$ from $K[u,v]$ and
 \[
  d(x^iy^j)=C_i(x)u^{i-1}v^j+(-1)^iC_j(y)u^iv^{j-1}.
 \]
 
 Denote $H_n(\oK,M)=\Tor^{\oK}_n(R,M)$. Then
 \[
  H_n(\oK,M)=
  \begin{cases}
   M/(x-p)M &\text{if } n=0,\\
   \Ker(x-p)_M/xM &\text{if  $n$ is odd},\\
   \Ker x_M/(x-p)M &\text{if $n$ is even},
  \end{cases}
 \]
 where $a_M$ denotes the multiplication by $a$ in the module $M$.
 Let $R_0=\oK/(x),\,R_p=\oK/(x-p)$. Then $R_{\al\be}\simeq R_\al\*R_\be$. As the ring $R$ is hereditary, the K\"unneth formula \cite[Theorem~VI.3.2]{CE} implies that
 \begin{align*}
  H_n(K,R_{\al\be})\simeq &   \textstyle\big(\bop_{i+j=n}H_i(\oK,R_\al)\*H_j(\oK,R_\be)\big)\+\\
  \+&   \textstyle\big(\bop_{i+j=n-1} \Tor_1^R(H_i(\oK,R_\al),H_j(\oK,R_\be))\big).
 \end{align*}
 Since
 \begin{align}
  H_n(\oK,R_0)&=
  \begin{cases}
   0 &\text{if $n$ is odd},\\
   \Bbbk &\text{if $n$ is even};
  \end{cases}	\notag\\
   H_n(\oK,R_p)&=
  \begin{cases}
   R &\text{if } n=0,\\
   \Bbbk &\text{if $n$ is odd},\\
   0 &\text{if $n$ is even},
  \end{cases}	\notag\\
  \intertext{we obtain}
  H_n(K,R_{pp})&=
  \begin{cases}
   R &\text{if } n=0,\\
   (R/p)^{[(n+3)/2]} &\text{if $n$ is odd},\\
  (R/p)^{n/2} &\text{if $n$ is even};
  \end{cases} \label{e21} \\
  \intertext{and if $(\al,\be)\ne(p,p)$}
    H_n(K,R_{\al\be})&=(R/p)^{[(n+2)/2]}. \label{e22} 
 \end{align}
  
  On the other hand, the exact sequence $0\to K\to A\to \Bbbk\to0$ implies that for $n>0$  
  \begin{equation}\label{e23} 
  H_n(K,A)\simeq H_n(K,\Bbbk)\simeq P_n\*_K\Bbbk\simeq \Bbbk^{n+1},
  \end{equation}
   since $H_n(K,K)=0$ and the differential in $\bP\*_K\Bbbk$ is zero.
   
  As $K$ is Gorenstein, the functor $M\mapsto M\ch=\Hom_K(M,K)$ is an exact duality in the category $K\lat$, i.e. the natural map
  $M\mapsto M^{\scriptscriptstyle{\vee\vee}}$ is an isomorphism. If $P$ is projective, then $P\*_KM\simeq\Hom_K(P\ch,M)$. Therefore, homologies
  of a module $M$ can be obtained as $H_n(\Hom_K(\bP\ch,M)$. Note that the embedding $R\to P_0\ch\simeq K$ maps $1$ to $xy$.
  Hence, just as for finite groups, we can consider a \emph{full resolution} $\hbP$ setting 
  \[
    \hP_n=
    \begin{cases}
     P_n &\text{if } n\ge 0,\\
     P\ch_{-n-1} &\text{if } n<0
    \end{cases}  
  \]
  and defining $d_0:K=\hP_0\to\hP_{-1}\simeq K$ as multiplication by $xy$. Thus the \emph{Tate cohomologies} $\hH^n(K,M)$ are
  defined as $H^n(\Hom_K(\hbP,M))$ with the usual properties
  \[
   \hH^n(K,M)=
   \begin{cases}
    H^n(K,M) &\text{if } n>0,\\
    H_{-n-1}(K,M) &\text{if } n<-1,\\
    M_{pp}/xyM &\text{if } n=0,\\
    \{m\mid xym=0\}/((x-p)M+(y-p)M) &\text{if } n=-1,
   \end{cases}
  \]
  where $M_{pp}=\{m\mid xm=ym=pm\}$.
  In particular, $xy\hH^n(K,M)=p^2\hH^n(K,M)=0$ for all $M$. Note also that, if $M$ is an $A$-lattice, 
  $M_{pp}=\{m\mid zm=pm\}$ and $xyM=z^2M$.
  
  A basis of $\hP_{-n}\ (n>0)$ can be chosen as $\{\hu^i\hv^j\mid i+j=n-1\}$, where $(\hu^i\hv^j)(u^kv^l)=\de_{ik}\de_{jl}$. Then
  \[
   d(\hu^i\hv^j)=C_{i+1}\hu^{i+1}\hv^j+(-1)^iC_{j+1}\hu^i\hv^{j+1}.
  \]
  
  \begin{prop}\label{p23} 
   If $M$ is an $A$-lattice that has no direct summands isomorphic to $R_{pp}$, then $$\hH^0(K,M)= M_{pp}/pM_{pp}\simeq \Bbbk^{d_{pp}},$$
   where $(d_\bu\mid d_{pp},d_{p0},d_{0p},d_{00})=\rk M$.
  \end{prop}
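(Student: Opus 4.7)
The plan is to start from the formula $\hat H^0(K,M)=M_{pp}/xyM$ recorded just before the proposition and reduce the statement to the single identity $xyM=pM_{pp}$. Once this identity is in hand, both quotients coincide and the dimension count $\dim_\Bbbk M_{pp}/pM_{pp}=d_{pp}$ will fall out of a description of $M_{pp}$ purely in terms of $M\sh$.

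The computation of $xyM$ rests on the identities $xy=pz=z^2$ in $A\subset Q^4$. Viewing $M\sbe M\sh=\bop_{\al\be}M\sh_{\al\be}$, the element $z=(p,0,0,0)$ acts on $M\sh$ by multiplication by $p$ on the $(p,p)$-summand and by zero on the others, so $zm=p\pi_{pp}(m)$ where $\pi_{pp}:M\sh\to M\sh_{pp}$ is the projection. Consequently $zM=p\pi_{pp}(M)$ and $xyM=pzM=p^2\pi_{pp}(M)$. By Theorem~\ref{t11} the map $f_{pp}:V_\bu\to V_{pp}$ is surjective, which is precisely the assertion $\pi_{pp}(M)+pM\sh_{pp}=M\sh_{pp}$, so Nakayama's lemma yields $\pi_{pp}(M)=M\sh_{pp}$. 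Hence $xyM=p^2M\sh_{pp}$, and the desired identity $xyM=pM_{pp}$ reduces to $M_{pp}=pM\sh_{pp}$.

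The substantive step, and the main obstacle, is this last equality under the no-$R_{pp}$-summand hypothesis. One inclusion is automatic: $pM\sh=\gM M\sbe M$ gives $pM\sh_{pp}\sbe M\cap M\sh_{pp}=M_{pp}$. For the reverse I would argue by contrapositive. Suppose $m\in M_{pp}$ has nonzero image in $M\sh_{pp}/pM\sh_{pp}$. Then $m$ extends to an $R$-basis of the free $R$-module $M\sh_{pp}$, so $M\sh_{pp}=Rm\+ P'$ as $R$-modules. Since $x$ and $y$ act by $p$ throughout $M\sh_{pp}$, any $R$-linear projection of $M\sh_{pp}$ onto an $R$-submodule is automatically $A$-linear. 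Composing the $A$-linear map $\pi_{pp}:M\to M\sh_{pp}$ with the $R$-linear projection $M\sh_{pp}\to Rm$ therefore produces an $A$-linear retraction $M\to Rm\simeq R_{pp}$, contradicting the assumption that $M$ has no $R_{pp}$ summand. Modulo this splitting-off argument, everything else is bookkeeping, and the final dimension count $M_{pp}/pM_{pp}=pM\sh_{pp}/p^2M\sh_{pp}\simeq M\sh_{pp}/pM\sh_{pp}\simeq\Bbbk^{d_{pp}}$ is immediate from the definition of $d_{pp}$.
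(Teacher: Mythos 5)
Your proposal is correct and follows essentially the same route as the paper: compute $xyM=p^2M\sh_{pp}$, observe $pM\sh_{pp}\sbe M_{pp}\sbe M\sh_{pp}$, and show $M_{pp}=pM\sh_{pp}$ by splitting off a copy of $R_{pp}$ from $M$ otherwise. Your explicit $A$-linear retraction $M\to Rm$ is just a spelled-out version of the paper's step ``$M\sh=L\+L'$, hence $M=L\+(L'\cap M)$,'' and your use of surjectivity of $f_{pp}$ plus Nakayama replaces the paper's one-line identity $xyA=xyA\sh$, so the two arguments coincide in substance.
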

  \begin{proof}
   Set $M\sh=A\sh M=\bop_{\al\be}M\sh_{\al\be}$. Note that $xyA=Rxy=xyA\sh$, hence $xyM=xyM\sh=p^2M\sh_{pp}$. On the other hand, 
   $M\sh_{pp}\simeq R_{pp}^{d_{pp}}$ and $pM\sh_{pp}\sbe M_{pp}\sbe M\sh_{pp}$. If $M_{pp}\ne pM\sh_{pp}$, $M_{pp}$ contains a direct summand 
   $L\simeq R_{pp}$ of $M\sh_{pp}$. Then $M\sh=L\+L'$ and $M=L\+(L'\cap M)$, which is impossible. Therefore, $M_{pp}=pM\sh_{pp}$,
   $xyM=pM_{pp}$ and $\hH^0(K,M)= M_{pp}/pM_{pp}\simeq\Bbbk^{d_{pp}}$.
  \end{proof}
  
  Denote $T=Q/R$, $DM=\Hom_R(M,T)$. It is the Matlis duality between noetherian and artinian $R$-modules, as well as $K$-modules 
  \cite{matlis}. We have the following dualities for cohomologies.
  
  \begin{prop}\label{p24} 
   Let $M$ be a $K$-module. Then
   \vspace*{-1ex}
   \begin{align}
 & \hH^n(K,DM)\simeq D\hH^{-n-1}(K,M), \label{du1}\\
  \intertext{and if $M$ is a lattice}
 & \hH^n(K,DM)\simeq \hH^{n+1}(K,M\ch), \label{du2}\\
& \hH^n(K,M\ch)\simeq D\hH^{-n}(K,M).  \label{du3} 
   \end{align}
  \end{prop}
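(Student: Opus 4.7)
The plan is to prove (du1) and (du2) separately and then deduce (du3) by composing them.

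For (du1), I would begin with the Hom-tensor adjunction for Matlis duality: for any $K$-module $X$,
\[
 \Hom_K(X, DM)\simeq\Hom_R(X\*_K M, T)=D(X\*_K M).
\]
Applied levelwise to the full resolution $\hbP$ this is an isomorphism of cochain complexes $\Hom_K(\hbP, DM)\simeq D(\hbP\*_K M)$. Since $T=Q/R$ is injective over $R$, the functor $D$ is exact, and taking cohomology yields $\hH^n(K, DM)\simeq D H_n(\hbP\*_K M)$. It then remains to identify the Tate homology $H_n(\hbP\*_K M)$ with $\hH^{-n-1}(K,M)$, which I would do using the self-duality $\hP_n\ch\simeq\hP_{-n-1}$ of the full resolution: for $n\ge 1$ both sides reduce to $\Tor_n^K(R,M)$, while the boundary cases $n=0,-1$ follow by direct comparison with the explicit formulas for $\hH^0$ and $\hH^{-1}$ given above.

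For (du2), the first step is to identify $DM\simeq M\ch\*_R T$ for a lattice $M$. This rests on $K$ being a Frobenius $R$-algebra: the associative pairing $(a,b)\mapsto\la(ab)$, where $\la$ picks out the $xy$-component in the $R$-basis $\{1,x,y,xy\}$ of $K$, has unimodular Gram matrix, so $\Hom_R(K,R)\simeq K$ as $K$-bimodule. The change-of-rings adjunction then gives $\Hom_R(M,R)\simeq M\ch$ for any lattice $M$, and tensoring over $R$ with $T$ (using $R$-flatness of $M\ch$) yields the claim. Next, tensor the exact sequence $0\to R\to Q\to T\to 0$ with $M\ch$ to obtain $0\to M\ch\to Q\*_R M\ch\to DM\to 0$. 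By $R$-flatness of $Q$ and the finite generation of $\hbP$ in each degree, $\Hom_K(\hbP, Q\*_R M\ch)\simeq Q\*_R\Hom_K(\hbP, M\ch)$, so $\hH^n(K, Q\*_R M\ch)\simeq Q\*_R\hH^n(K, M\ch)$; since $\hH^n(K,M\ch)$ is $p^2$-torsion by the Tate version of Proposition~\ref{p22} stated in the text, this vanishes for every $n$. The long exact sequence in Tate cohomology then collapses to the connecting isomorphism $\hH^n(K, DM)\simeq\hH^{n+1}(K,M\ch)$, which is (du2).

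Finally, composing (du2) with (du1) produces
\[
 \hH^n(K, M\ch)\simeq\hH^{n-1}(K,DM)\simeq D\hH^{-n}(K,M),
\]
which is (du3). The main obstacles I anticipate are establishing the Frobenius identification $\Hom_R(M,R)\simeq M\ch$ for lattices (a unimodularity check for the Gram matrix in the basis $\{1,x,y,xy\}$) and the boundary-degree bookkeeping matching Tate homology to shifted Tate cohomology; both are direct calculations that exploit the explicit form of $\hbP$ and the defining relations $x(x-p)=y(y-p)=0$.
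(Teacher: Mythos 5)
Your proposal is correct and follows essentially the same route as the paper: \eqref{du1} from injectivity of $T$ together with the Hom--tensor (Ext--Tor) duality, \eqref{du2} from the sequence $0\to R\to Q\to T\to 0$ (your $0\to M\ch\to Q\*_R M\ch\to DM\to 0$ is the paper's $0\to M\ch\to\Hom_R(M,Q)\to DM\to0$) plus vanishing of Tate cohomology of the $Q$-vector-space term, and \eqref{du3} by composing the two. You merely spell out details the paper compresses, namely the Frobenius identification $\Hom_R(K,R)\simeq K$ (the paper cites ``local Gorenstein'') and the degree bookkeeping on the full resolution (the paper cites Cartan--Eilenberg VI.5.1).
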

  \begin{proof}
  Note first that, since $K$ is local and Gorenstein, $\Hom_R(K,R)\simeq K$, whence $M\ch\simeq \Hom_R(M,R)$ and we identify these modules.
  As $T$ is an injective $R$-module, 
  \[
     \Ext^n_K(R,\Hom_R(M,T))\simeq \Hom_R(\Tor_n^K(R,M),T),
  \]
  (see \cite[Proposition~VI.5.1]{CE}), which is just \eqref{du1}.
  
  The exact sequence $0\to R\to Q\to T\to0$ gives, for any lattice $M$, the exact sequence
  \[
    0\to M\ch\to \Hom_R(M,Q)\to DM \to 0.  
  \]
  As multiplication by $p^2$ is an automorphism of $\Hom_R(M,Q)$, Proposition~\ref{p22} implies that $\hH^n(\Hom_R(M,Q))=0$. Then the 
  long exact sequence for cohomologies implies \eqref{du2}.
  
  \eqref{du3} is a combination of \eqref{du1} and \eqref{du2}.
  \end{proof}
  
  Note also that $\hH^n(K,F)=0$ for any projective (hence free) $K$-module $F$. Therefore, Proposition~\ref{p13} implies that, for any
  indecomposable $A$-lattice $M$,
  \begin{equation}\label{e24} 
   \hH^n(K,M)\simeq \hH^{n+1}(K,\tau_K M)\simeq \hH^{n-1}(K,\tau^{-1}_KM).
  \end{equation}  
  
  Hence from the formulae \eqref{e21}-\eqref{e23} and the duality \eqref{du3} we obtain a complete description of cohomologies of $K$-lattices 
  belonging to the preprojective-preinjective component.
  
  \begin{thm}  \label{t25} 
 \begin{align*}    
   & \hH^n(K,A^k)\simeq
   \begin{cases}
    \Bbbk^{n-k+1} &\text{ \rm if } n\ge k,\\
    \Bbbk^{k-n} &\text{ \rm if } n<k;
   \end{cases}\\
   & \hH^n(K,R_{pp}^k)\simeq
   \begin{cases}
    \Bbbk^{(|n-k|/2+1)} &\text{ \rm if } n-k\ne 0 \text{ \rm is even},\\
    \Bbbk^{|n-k|/2]} &\text{ \rm if } n-k \text{ \rm is odd},\\
    R/xyR &\text{ \rm if } n=k;
   \end{cases}\\
& \hH^n(K,R_{\al\be}^k)\simeq  \Bbbk^{[(|n-k|+1)/2]}\ \text{ \rm if } (\al,\be)\ne(p,p).
  \end{align*}
  \end{thm}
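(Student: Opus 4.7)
The plan is to reduce the whole theorem to the case $k=0$ via the \AR-translate shift \eqref{e24}, and then handle five base lattices $M\in\{A,R_{pp},R_{p0},R_{0p},R_{00}\}$ for all $m\in\mathbb{Z}$. Iterating \eqref{e24} gives $\hH^n(K,M^k)\cong\hH^{n-k}(K,M)$, so each clause of the theorem reduces to computing $\hH^m(K,M)$ at $m=n-k$ and translating back.

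The computation splits by the sign of $m$. For $m\le-2$, Tate cohomology coincides with ordinary homology, $\hH^m(K,M)=H_{-m-1}(K,M)$, so formulas \eqref{e21}, \eqref{e22} and \eqref{e23} immediately supply the claimed dimensions after a brief parity bookkeeping to convert the index $n=-m-1$ to the indexing of the theorem. For the edge cases $m\in\{0,-1\}$, Proposition~\ref{p23} applies to $M=A$ (giving $\Bbbk$, since $d_{pp}=1$) and to $M=R_{\alpha\beta}$ with $(\alpha,\beta)\ne(p,p)$ (giving $0$, since $d_{pp}=0$); it fails for $M=R_{pp}$, so one evaluates $\hH^0(K,R_{pp})=(R_{pp})_{pp}/xyR_{pp}=R/p^2R=R/xyR$ directly. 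At $m=-1$ the formula $\hH^{-1}(K,M)=\{m\mid xym=0\}/((x-p)M+(y-p)M)$ is evaluated case by case: on each $R_{\alpha\beta}$ with $(\alpha,\beta)\ne(p,p)$, $xy$ acts as $0$ and at least one of $x-p,\,y-p$ acts as $-p$, producing $R/pR=\Bbbk$; on $R_{pp}$ the kernel of $p^2$ on the torsion-free module $R$ vanishes; on $A$ a short calculation in the $R$-basis $\{1,x,y,z\}$ yields $\Bbbk$.

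For $m\ge 1$ one invokes the duality \eqref{du3}, $\hH^m(K,M)\cong D\hH^{-m}(K,M\ch)$, and reduces to the previously handled ranges after identifying $M\ch$ inside the preprojective-preinjective orbit. The excerpt supplies $A\ch\cong A^1$, and for the simple lattices the computation $\Hom_K(R_{\alpha\beta},K)=\mathrm{ann}_K(x-\alpha)\cap\mathrm{ann}_K(y-\beta)$ shows that $R_{\alpha\beta}\ch\cong R_{\alpha\beta}$ in every case. Combining with the \AR\ shift and noting that Matlis duality is the identity on finite-dimensional $\Bbbk$-vector spaces yields the positive-$m$ values.

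The main delicate point will be the row $M=R_{pp}^k$ at $n=k$: the cohomology is not a $\Bbbk$-vector space but the $R$-module $R/xyR$ of length two, so the answer does not fit the uniform $\Bbbk^{\cdots}$ template covering every other case in the theorem. This is precisely the obstruction that forces the exclusion of $R_{pp}$-summands in Proposition~\ref{p23}, and it is the reason for the separate clause in the statement.
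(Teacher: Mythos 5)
Your proposal is correct and follows essentially the same route as the paper, which derives Theorem~\ref{t25} in one stroke from the shift \eqref{e24}, the homology formulae \eqref{e21}--\eqref{e23}, and the duality \eqref{du3}; you merely make explicit the edge degrees $m\in\{0,-1\}$ (via Proposition~\ref{p23} and the formulas for $\hH^0$, $\hH^{-1}$) and the identifications $A\ch\simeq A^1$, $R_{\al\be}\ch\simeq R_{\al\be}$ that the paper leaves implicit.
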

   
   The description of cohomologies of $A$-lattices belonging to tubes are obtained from Proposition~\ref{p23}, since $\hH^n(K,M)\simeq \hH^0(K,\tau^{-n}_KM)$
   and we know the action of $\tau_K$ in tubes.
   
   \begin{thm}\label{t26}     
    \begin{enumerate}
    \item If $f\notin\{t,t-1\}$, then $\hH^n(K,T^f_m)\simeq\Bbbk^{dm}$, where $d=\deg f$.
    \smallskip
    \item If $\la\in\{0,1,\8\}$, then
    \[
    \hH^n(K,T_m^{\la i})\simeq
    \begin{cases}
     \Bbbk^{m/2} &\text{ \rm if $m$ is even},\\
     \Bbbk^{(m-(-1)^{n+i})/2} &\text{ \rm if $m$ is odd}.
    \end{cases}    
    \]
    \end{enumerate}
   \end{thm}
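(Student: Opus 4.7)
The plan is to reduce every $\hH^n(K,-)$ on a tube module to an instance of $\hH^0$ via the iterated isomorphism $\hH^n(K,M)\simeq \hH^0(K,\tau_K^{-n}M)$ from \eqref{e24}, and then apply Proposition~\ref{p23} to read off the $d_{pp}$-component of the vector-rank of $\tau_K^{-n}M$. The hypothesis of Proposition~\ref{p23} is automatic, since each $T^f_m$ and each $T^{\la i}_m$ is indecomposable and lies in a tube, while $R_{pp}$ lies in the preprojective--preinjective component, so no summand $R_{pp}$ can appear.

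For part (1), the tube $\kT^f$ is homogeneous, so $\tau_K^{-n}T^f_m\simeq T^f_m$ for all $n$, and the $d_{pp}$-coordinate of $\rk T^f_m=(2dm\mid dm,dm,dm,dm)$ yields $\hH^n(K,T^f_m)\simeq\Bbbk^{dm}$ at once.

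For part (2), the action $\tau_K T^{\la 1}_m\simeq T^{\la 2}_m$ and conversely implies that $\tau_K^{-n}T^{\la i}_m\simeq T^{\la i}_m$ when $n$ is even and $\tau_K^{-n}T^{\la i}_m\simeq T^{\la j}_m$ with $j\ne i$ when $n$ is odd. From \eqref{erk}, together with the observation that the permutations producing the ranks for $\kT^0$ and $\kT^\8$ from those for $\kT^1$ do not disturb $d_{pp}$, I read off $d_{pp}(T^{\la i}_{2m})=m$ for both $i=1,2$, and $d_{pp}(T^{\la 1}_{2m-1})=m$, $d_{pp}(T^{\la 2}_{2m-1})=m-1$. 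Combining this with the parity of $n$ gives the claim.

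The only delicate point is checking that the four subcases arising when $m$ is odd (parity of $n$ crossed with $i\in\{1,2\}$) collapse into the single expression $(m-(-1)^{n+i})/2$, which I would verify by a small tabulation; for $m$ even the answer is $m/2$ regardless of $(n,i)$ and nothing needs checking. There is no substantive obstacle — the theorem is essentially a direct transcription of the $\tau_K$-orbits on tubes and the vector-rank formulas already compiled in Section~\ref{sec1}.
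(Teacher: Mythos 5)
Your proposal is correct and is essentially the paper's own argument: the paper derives Theorem~\ref{t26} exactly by iterating \eqref{e24} to reduce to $\hH^0(K,\tau_K^{-n}M)$, applying Proposition~\ref{p23} (whose hypothesis holds since tube modules are indecomposable and $R_{pp}$ is preprojective--preinjective), and reading off $d_{pp}$ from the $\tau_K$-action on tubes and the rank formulae \eqref{erk}. The four-case tabulation you defer does check out against the expression $(m-(-1)^{n+i})/2$, so nothing is missing.
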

  
  \section{Regular lattices}
  \label{sec3} 
 
  An $A$-lattice $M$ is called \emph{regular} if all its indecomposable direct summands belong to tubes. As neither regular lattice is projective,
  $\tau_KM=\tau_AM=\Om M$.  Note that if $M$ is regular, then
  \begin{align}
  & 2d_\bu(M)={\sum}_{\al\be} d_{\al\be}(M). \label{e25}\\
  \intertext{Therefore,}
  &  d_\bu(\Om M)=d_\bu(M) \ \text{ and }\ d_{\al\be}(\Om M)=d_\bu(M)-d_{\al\be}(M). \label{e26} 
  \end{align}
 These formulae imply the following fact.
   
  \begin{lem}\label{l31} 
   Every exact sequence of regular $A$-lattices $0\to M\to N\to L\to0$ induces exact sequences
   \begin{align}
    & 0\to\Om M\to\Om N\to\Om L\to0,  \label{e27} \\
    & 0\to\Om^{-1} M\to\Om^{-1} N\to\Om^{-1} L\to0. \label{e28}
   \end{align}
  \end{lem}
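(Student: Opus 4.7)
The plan is to apply the Horseshoe Lemma to the given sequence together with minimal $K$-projective covers of $M$ and $L$, and then verify via the vector-rank identities \eqref{e25} and \eqref{e26} that the resulting cover of $N$ is itself minimal; the $\Om^{-1}$-statement will then follow by $K$-duality.

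First, I would check that the minimal $K$-projective cover of a regular $A$-lattice $M$ is $K^{d_\bu(M)}$. Indeed, by \eqref{e26} the $R$-rank of $\Om M$ equals $4d_\bu(M)-\sum_{\al\be}d_{\al\be}(M)$, which by \eqref{e25} simplifies to $2d_\bu(M)$; comparing $R$-ranks in the defining sequence $0\to\Om M\to K^r\to M\to 0$ then forces $r=d_\bu(M)$. Since $R$-ranks are additive in the given short exact sequence, applying \eqref{e25} to each of $M,N,L$ yields $d_\bu(N)=d_\bu(M)+d_\bu(L)$.

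Now the Horseshoe Lemma for the minimal covers $K^{d_\bu(M)}\to M$ and $K^{d_\bu(L)}\to L$ produces a surjection $K^{d_\bu(M)+d_\bu(L)}=K^{d_\bu(N)}\twoheadrightarrow N$ whose kernel $K'$ fits into a short exact sequence $0\to\Om M\to K'\to\Om L\to 0$. Since $K$ is local and the rank of this cover matches the required rank $d_\bu(N)$ of the minimal projective cover of $N$, Nakayama's lemma forces it to be a projective cover of $N$; hence $K'\simeq\Om N$, which gives \eqref{e27}. For \eqref{e28} I would invoke the exact duality $(-)\ch=\Hom_K(-,K)$ on $K\lat$: it permutes the tubes in the \AR\ quiver and so preserves regularity, and dualizing the given sequence produces a short exact sequence of regular lattices $0\to L\ch\to N\ch\to M\ch\to 0$, to which \eqref{e27} applies. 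Dualizing back and using the identification $(\Om M\ch)\ch\simeq\Om^{-1}M$---the dual of a minimal projective cover of $M\ch$ is a minimal injective envelope of $M$ in the Frobenius category $K\lat$ (where projective = injective), whose cokernel is $\Om^{-1}M$---then yields \eqref{e28}.

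The main delicate point is the rank-matching that certifies minimality of the Horseshoe cover: it combines the equality $r=d_\bu(M)$ for the minimal cover rank (which relies on \eqref{e26}) with the $d_\bu$-additivity across the sequence (which relies on \eqref{e25} applied to each of $M,N,L$). Once both ingredients are in place, everything else is formal: the Horseshoe gives the middle term for free, local-ring minimality identifies it with $\Om N$, and duality transports the first statement to the second.
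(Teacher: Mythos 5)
Your proof is correct and takes essentially the same route as the paper: the paper likewise produces the (horseshoe) sequence $0\to\Om M\to\Om N\+P\to\Om L\to0$ with $P$ projective and kills $P$ by the rank identities \eqref{e25}--\eqref{e26}, then obtains \eqref{e28} by the duality $(-)\ch$. Your version differs only in certifying $P=0$ by showing the horseshoe cover $K^{d_\bu(N)}\to N$ is already a minimal projective cover (generator count instead of comparing vector-ranks of the syzygies), and in spelling out the duality step that the paper leaves implicit.
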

  \begin{proof}
    Obviously, there is an exact sequence $0\to\Om M\to\Om N\+P\to\Om L\to0$ for some projective module $P$. On the other hand, as
    $d_{\al\be}(N)=d_{\al\be}(M)+d_{\al\be}(L)$, the formulae \eqref{e25} and \eqref{e26} imply that $\rk\Om N=\rk\Om M+\rk L$. Hence
    $P=0$    and we obtain \eqref{e27}. By duality, we also have \eqref{e28}.
  \end{proof}
  
  \begin{cor}\label{c32} 
   Every exact sequence of regular $A$-lattices $0\to M\to N\to L\to0$ induces exact sequences of cohomologies
   \[
   0\to \hH^n(K,M)\to \hH^n(K,N)\to  \hH^n(K,L)\to  0.
   \]
  \end{cor}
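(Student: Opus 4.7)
The plan is to take the long exact sequence of Tate cohomology for $0\to M\to N\to L\to 0$ and show that every connecting homomorphism vanishes, by a dimension count. First, I iterate Lemma~\ref{l31}: since $\tau_K=\Om$ preserves every tube, regular lattices are stable under $\Om^{\pm1}$, and Lemma~\ref{l31} applies repeatedly to give short exact sequences of regular lattices
\[
 0\to \Om^n M\to \Om^n N\to \Om^n L\to 0
\]
for every $n\in\mathbb{Z}$.

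Second, I compute dimensions. The indecomposable $R_{pp}$ lies in the preprojective--preinjective component, so no regular lattice has an $R_{pp}$ summand; Proposition~\ref{p23} therefore yields $\hH^0(K,X)\simeq\Bbbk^{d_{pp}(X)}$ for every regular $X$. Iterating the periodicity isomorphism \eqref{e24}, one obtains $\hH^n(K,X)\simeq \hH^0(K,\Om^{-n}X)$, and hence $\dim_\Bbbk\hH^n(K,X)=d_{pp}(\Om^{-n}X)$. Additivity of vector rank in the $\Om^{-n}$-shifted exact sequence (used already in the proof of Lemma~\ref{l31}) gives $d_{pp}(\Om^{-n}N)=d_{pp}(\Om^{-n}M)+d_{pp}(\Om^{-n}L)$, whence
\[
 \dim_\Bbbk\hH^n(K,N)=\dim_\Bbbk\hH^n(K,M)+\dim_\Bbbk\hH^n(K,L)
\]
for every $n\in\mathbb{Z}$.

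Finally, consider the long exact Tate cohomology sequence
\[
 \cdots\to\hH^n(K,M)\to\hH^n(K,N)\to\hH^n(K,L)\xrightarrow{\de_n}\hH^{n+1}(K,M)\to\cdots
\]
Each term is a $\Bbbk$-vector space (all are killed by $p$ by the previous step), so every map is $\Bbbk$-linear. Exactness at $\hH^n(K,N)$ gives
\[
 \dim\hH^n(K,N)\le\dim\hH^n(K,M)+\dim\hH^n(K,L),
\]
with equality precisely when $\de_n=0$ and the map $\hH^n(K,M)\to\hH^n(K,N)$ is injective. The additivity from the previous step forces this equality for every $n$, so $\de_n=0$ throughout and the long exact sequence splits into the desired short exact sequences. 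The main (and essentially only) subtle point is to verify that the ``regular'' property and the non-appearance of $R_{pp}$ as a summand are preserved under $\Om^{\pm n}$; both follow at once from the \AR\ quiver structure, since tubes are $\Om$-stable and disjoint from the preprojective--preinjective component in which $R_{pp}$ lies.
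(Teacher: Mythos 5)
Your proof is correct, and it uses the same two ingredients as the paper (Proposition~\ref{p23} and Lemma~\ref{l31}, together with the shift isomorphism \eqref{e24}), but it assembles them differently. The paper handles $n=0$ via Proposition~\ref{p23} and then runs an induction on $n$, transferring the exactness of the cohomology sequence along the $\Om^{\pm1}$-shifted short exact sequences provided by Lemma~\ref{l31}; that route implicitly requires keeping track of how the maps in the shifted sequences correspond, under dimension shifting, to the maps induced by the original sequence. You instead use the shifted sequences only for a numerical purpose: since regular lattices are $\Om^{\pm1}$-stable and never have $R_{pp}$ as a summand, $\dim_\Bbbk\hH^n(K,X)=d_{pp}(\Om^{-n}X)$ for regular $X$, and additivity of $d_{pp}$ on the shifted exact sequences gives $\dim\hH^n(K,N)=\dim\hH^n(K,M)+\dim\hH^n(K,L)$ for all $n$; then in the long exact Tate cohomology sequence (which exists because the complete resolution $\hbP$ consists of projectives, and which the paper already uses in proving Proposition~\ref{p24}) this equality forces every connecting homomorphism $\de_n$ to vanish and every map $\hH^n(K,M)\to\hH^n(K,N)$ to be injective. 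This buys you two things: the maps in your short exact sequences are manifestly the ones induced by the given sequence, and no naturality bookkeeping for the induction is needed; the cost is the (standard, but unstated in the corollary's own proof) appeal to the long exact sequence for $\hH^{\bullet}$. Both arguments are sound; yours is arguably the more self-contained formulation of the same dimension-count idea.
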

  \begin{proof}
   If $n=0$, it follows from Proposition~\ref{p23}. For other $n$ it is obtained by an easy induction using Lemma~\ref{l31}.
  \end{proof}
   
   For regular lattices we can give an explicit form of cocycles defining cohomology classes. Namely,
   for an indecomposable regular lattice $M$ and an integer $n$ we set
  \[
   M(n)=
   \begin{cases}
   M_{pp} &\text{if $n$ is even},\\
   M_{0p} &\text{if $n$ is odd and $M\notin\kT^\8$},\\
   M_{p0} &\text{if $n$ is odd and $M\in\kT^\8$}.
   \end{cases}  
  \]
  For $n>0$ we define a homomorphism $M(n)\to\hH^n(K,M)$ mapping an element $a\in M(n)$ to the class of the cocycle $\xi_a:P_n\to M$
  defined as follows:
  \begin{itemize}
  \item[$\bu$]  If $M\notin \kT^\8$, then
  \[
   \xi_a(u^kv^{n-k})=
   \begin{cases}
    a &\text{if } k=n,\\
    0 &\text{otherwise}.
   \end{cases}
  \]
    \item[$\bu$]  If $M\in \kT^\8$, then
  \[
   \xi_a(u^kv^{n-k})=
   \begin{cases}
    a &\text{if } k=0,\\
    0 &\text{otherwise}.
   \end{cases}
  \]
  \end{itemize}
  
  \begin{thm}\label{t33} 
  The map $a\mapsto\xi_a$ induces an isomorphism $$\xi:M(n)/pM(n)\simeq \hH^n(K,M)$$
   for every $n>0$ and every regular indecomposable $A$-lattice $M$.
  \end{thm}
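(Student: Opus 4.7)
The approach is a five-lemma induction along each tube, reducing to the quasi-simple base cases $T^f_1$ and $T^{\la i}_1$.

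I would first check the two preliminary claims. That $\xi_a$ is a cocycle is direct: on $u^k v^{n+1-k}\in P_{n+1}$, only the values of $k$ adjacent to the single corner where $\xi_a$ is supported can give a nonzero $\xi_a(d(u^k v^{n+1-k}))$, and the resulting expressions $C_{n+1}(x)a$ and $\pm C_1(y)a$ (or the $\kT^\8$-analogues) both vanish by the parity-sensitive eigenvalue relations built into the definition of $M(n)$ (e.g.\ for $n$ even, $a\in M_{pp}$ satisfies $(x-p)a=(y-p)a=0$). For $\xi_{pa}$: take $\eta\colon P_{n-1}\to M$ with single nonzero value $\pm a$ at $u^{n-1}v^0$ (respectively $u^0 v^{n-1}$); the sign is fixed so that $\eta\circ d=\xi_{pa}$. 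Thus $\xi$ descends to $M(n)/pM(n)$. A straightforward generalisation of Proposition~\ref{p23} --- the same summand argument applied to an arbitrary $(\al,\be)$ --- shows $M_{\al\be}=pM\sh_{\al\be}$ for every regular $M$, so $\dim_\Bbbk M(n)/pM(n)$ equals the appropriate entry of $\rk M$. Combined with Theorem~\ref{t26} and the tabulations \eqref{erk} this yields $\dim_\Bbbk M(n)/pM(n)=\dim_\Bbbk\hH^n(K,M)$.

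It therefore suffices to show $\xi$ is surjective for every indecomposable regular $M$. I induct on the position $m$ in the tube via $0\to T^f_{m-1}\to T^f_m\to T^f_1\to 0$ (homogeneous case) and $0\to T^{\la i}_{m-1}\to T^{\la i}_m\to T^{\la j}_1\to 0$ (exceptional case). Naturality of $\xi$ in $M$ yields the commutative diagram
\[
\xymatrix@C=1em{0 \ar[r] & L(n)/pL(n) \ar[r] \ar[d]^-{\xi^L} & M(n)/pM(n) \ar[r] \ar[d]^-{\xi^M} & N(n)/pN(n) \ar[r] \ar[d]^-{\xi^N} & 0 \\
0 \ar[r] & \hH^n(K,L) \ar[r] & \hH^n(K,M) \ar[r] & \hH^n(K,N) \ar[r] & 0}
\]
whose bottom row is exact by Corollary~\ref{c32}. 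For the top row: saturation of sublattices gives injectivity on the left; the identity $M_{\al\be}=pM\sh_{\al\be}$ combined with surjectivity of $M\sh_{\al\be}\to N\sh_{\al\be}$ (a consequence of the hereditary structure on $A\sh$-components) gives surjectivity on the right; and the additivity $d_{\al\be}(M)=d_{\al\be}(L)+d_{\al\be}(N)$ forces middle exactness by counting dimensions. The five-lemma propagates the isomorphism from the quasi-simple objects up the tube.

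The hard part is the base case: proving $\xi\colon M(n)/pM(n)\to\hH^n(K,M)$ is an isomorphism for every quasi-simple regular indecomposable. By the dimension count it suffices to show $\xi_a\ne 0$ whenever $a$ generates $M(n)/pM(n)$, equivalently that $\xi_a=\eta\circ d$ forces $a\in pM(n)$. I would analyse this using an explicit description of $M$ as a sublattice of $M\sh=\bop_{\al\be}M\sh_{\al\be}$: the final coboundary equation $C_n(x)b_{n-1}=a$ (or $C_n(y)b_0=a$ in the $\kT^\8$-case) pins the relevant $(pp)$- or $(p0)$-component of $b_{n-1}$ down to $\pm a/p$ in $M\sh$, and the very rigid structure of the embedding $M\hookrightarrow M\sh$ at the tube mouth (fixed by the minimal vector-rank data together with indecomposability) then forces $a\in pM(n)$ whenever such a $b_{n-1}$ actually lies in $M$ and the propagating relations $C_k(x)b_{k-1}+(-1)^kC_{n-k}(y)b_k=0$ are satisfied. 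The bookkeeping differs slightly between homogeneous and exceptional tubes and across parities of $n$, but in each case concludes injectivity, whence the theorem.
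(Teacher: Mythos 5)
Your proposal is correct and follows essentially the same route as the paper: verify $\xi_a$ is a cocycle, match dimensions via Theorem~\ref{t26} and \eqref{erk}, settle the quasi-simple lattices $T^f_1$, $T^{\la i}_1$ by an explicit coboundary analysis using the description of $M$ inside $M\sh$, and propagate up the tube by the commutative diagram with exact rows coming from Corollary~\ref{c32} (the paper uses $0\to T^f_1\to T^f_m\to T^f_{m-1}\to0$ rather than your $0\to T^f_{m-1}\to T^f_m\to T^f_1\to0$, which is immaterial). The only place you stop short is the base case, which the paper executes as a chain of congruences starting from the $v^n$-relation and showing each component $\ga_k\equiv0\pmod p$ before reading off $a\in pM(n)$ from the $u^n$-relation; note that the final equation alone does not pin anything down, so the propagation from the opposite end is the essential content your sketch defers.
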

  \begin{proof}
  One easily sees that $\xi_a$ is a cocycle. Theorem~\ref{t26} and formulae \eqref{erk} show that $\hH^n(K,M)\simeq M(n)/pM(n)$. Hence we only
  have to prove that $\xi_a$ is not a coboundary if $a\notin pM(n)$. First we check it for the lattices $T^f_1$ and $T^{\la i}_1$. 
  
  We consider the case when $n$ is even and $M=T^f_1$, where $\deg f=d$ and $f\notin\{t,t-1\}$. 
  The other cases are quite similar or even easier.
  The corresponding representation of the quiver $\Gamma$  is
  \[
   \vcenter{ \xymatrix@R=1em{ && \Bbbk^d	 \\ && \Bbbk^d \\
   \Bbbk^{2d} \ar[rruu]|{(I\ 0)}	\ar[rru]|{(0\ I)}	\ar[rrd]|{(I\ I)}	\ar[rrdd]|{(I\ F)} \\
    && \Bbbk^d	 \\ && \Bbbk^d } }  
  \]
  where $I$ is $d\times d$ unit matrix and $F$ is the Frobenius matrix with the characteristic polynomial $f(t)$. Therefore, $M$ is the submodule of
  $\bop_{\al\be}M\sh_{\al\be}$, where $M\sh_{\al\be}=R_{\al\be}^d$ and $M$ consists of the quadruples 
  $a=(a_{pp},a_{p0},a_{0p},a_{00})\equiv(r,r',r+r',r+\tF r')\pmd$, where $r,r'\in R^d$ and $\tF$ is a
  $d\times d$ matrix over $R$ such that  $F=\tF\pmd$. Hence, $M_{\al\be}=pM\sh_{\al\be}$. In particular, elements $a\in M(n)$
  are of the form $(pr,0,0,0)$.  Let $\xi_a=d\ga$, where
  $\ga(x^{k-1}y^{n-k})=\ga_k\equiv(r_k,r'_k,r_k+r'_k,r_k+\tF r'_k)\pmd$ for $1\le k\le n$. Then
  \[
   d\ga(v^n) =0=y\ga_1\equiv(pr_1,0,p(r_1+r'_1),0)\ppmd,
  \]
  hence $\ga_1\equiv0\pmd$. Suppose that $\ga_{k-1}\equiv0\pmd$ for some $1<k\le n$. If $k$ is odd, then
  \begin{align*}
   d\ga(u^{k-1}v^{n-k+1})=0&=x\ga_{k-1}+y\ga_k\equiv\\
   	 &\equiv(pr_k,0,p(r_k+r'_k),0)\ppmd,
  \end{align*}
  If $k$ is even, then
    \begin{align*}
   d\ga(u^{k-1}v^{n-k+1})=0&=(x-p)\ga_{k-1}-(y-p)\ga_k\equiv\\
   	 &\equiv(0,pr'_k,0,p(r_k+\tF r'_k))\ppmd.
  \end{align*}
  In both cases $\ga_k\equiv0\pmd$. Therefore, $\ga_k\equiv0\pmd$ for all $1\le k\le n$. Then
  \[
    d\ga(u^n)=(a,0,0,0)=x\ga_n\equiv0\ppmd,
  \]
  so $a\in pM(n)$.  
 
  Suppose now that the theorem is valid for all $T^f_{k-1}$ and for all $T^{\la i}_{k-1}$. If $M=T^f_k$ or $M=T^{\la i}_k$,
  there is an exact sequence $0\to M'\to M\to M''\to 0$, where, respectively, $M'=T^f_1,\,M''=T^f_{k-1}$ or 
  $M'=T^{\la i},\,M''=T^{\la j}_{k-1}\ (j\ne i)$. It gives a commutative diagram with exact rows
  \[
      \xymatrix{ 0 \ar[r] & M'(n) \ar[d]^\xi	\ar[r] & M(n) \ar[d]^\xi \ar[r] & M''(n) \ar[d]^\xi \ar[r] & 0  \\
    0 \ar[r] & \hH^n(K,M') 	\ar[r] & \hH^n(K,M) \ar[r] & \hH^n(K,M'') \ar[r] & 0      
      }
  \]
  Using induction, we may suppose that the first and the third homomorphisms $\xi$ satisfy the theorem. Therefore, 
  so does the second, which accomplishes the proof.
  \end{proof}
  
  Dualizing this construction, we obtain an explicit description of Tate cohomologies with negative indices.
  Namely, for $n<0$ we define a homomorphism $M(n)\to\hH^n(K,M)$ mapping an element $a\in M(n)$ to the class of the cocycle $\hxi_a:P_n\to M$
  defined as follows:
  \begin{itemize}
  \item[$\bu$]  If $M\notin \kT^\8$, then
  \[
   \hxi_a(\hu^k\hv^{|n|-1-k})=
   \begin{cases}
    a &\text{if } k=|n|-1,\\
    0 &\text{otherwise}.
   \end{cases}
  \]
    \item[$\bu$]  If $M\in \kT^\8$, then
  \[
   \hxi_a(\hu^k\hv^{|n|-1-k})=
   \begin{cases}
    a &\text{if } k=0,\\
    0 &\text{otherwise}.
   \end{cases}
  \]
  \end{itemize}
  
  \begin{thm}\label{t34} 
  The map $a\mapsto\hxi_a$ induces an isomorphism $$\hxi:M(n)/pM(n)\simeq \hH^n(K,M)$$
   for every $n<0$ and every regular indecomposable $A$-lattice $M$.
  \end{thm}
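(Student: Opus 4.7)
The plan is to mirror the proof of Theorem~\ref{t33}, working in the negative part of the full resolution $\hbP$ in place of $\bP$. First I would verify that $\hxi_a$ is a cocycle. Using the formula $d(\hu^i\hv^j)=C_{i+1}\hu^{i+1}\hv^j+(-1)^iC_{j+1}\hu^i\hv^{j+1}$, the definition of $\hxi_a$ means that $\hxi_a\circ d$ has a single nonzero contribution on $\hP_{n+1}$, equal to $C_{|n|-1}(x)a$ when $M\notin\kT^\8$ (respectively, $C_{|n|-1}(y)a$ when $M\in\kT^\8$); this vanishes precisely because $a\in M(n)$. The case $n=-1$ is handled separately: the coboundary reduces to $xya$, which vanishes since $a\in M_{0p}$ or $M_{p0}$ already kills $x$ or $y$.

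By Theorem~\ref{t26} combined with the vector-rank formulae~\eqref{erk} (and their analogues for $\kT^0,\kT^\8$), both $M(n)/pM(n)$ and $\hH^n(K,M)$ have the same $\Bbbk$-dimension, so only injectivity of $a\mapsto\hxi_a$ modulo $p$ remains. As in Theorem~\ref{t33}, I would first handle the base cases $M=T^f_1$ (with $f\notin\{t,t-1\}$) and $M=T^{\la i}_1$: write a putative primitive $\hat\ga$ via its values on the dual basis, $\hat\ga(\hu^k\hv^{|n|-2-k})\equiv(r_k,r'_k,r_k+r'_k,r_k+\tF r'_k)\pmd$, and read off the equation $\hxi_a=d\hat\ga$ as a sequence of congruences modulo $p^2$ in the four components. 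Peeling off basis elements one at a time forces $\hat\ga\equiv0\pmd$ recursively; the alternation of $C_i=x$ versus $x-p$ in the differential on $\hbP$ runs in the opposite parity pattern compared to $\bP$, but the same elimination succeeds and the final equation yields $a\in pM(n)$.

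For general $T^f_m$ or $T^{\la i}_m$ with $m>1$, I would induct on $m$ using the tube filtration $0\to M'\to M\to M''\to 0$ with $M'$ of rank~$1$. Corollary~\ref{c32} makes the induced sequence of Tate cohomologies short exact; moreover the additivity $d_{\al\be}(M)=d_{\al\be}(M')+d_{\al\be}(M'')$ for regular lattices produces a short exact sequence $0\to M'(n)\to M(n)\to M''(n)\to 0$ of free $R$-modules of the correct ranks, which reduces mod $p$ to a short exact sequence of $\Bbbk$-vector spaces. Naturality of $a\mapsto\hxi_a$ gives a commutative diagram with exact rows, and the five lemma closes the induction.

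The main obstacle is the base-case calculation: keeping track of the mod-$p^2$ congruences in the four components $\al\be$, and verifying that the parity alternation of $C_{i+1}$ vs.\ $C_{j+1}$ in the differential of $\hbP$ still forces the recursion to collapse in both parities of $n$ and in both the $M\notin\kT^\8$ and $M\in\kT^\8$ cases. An alternative route is to invoke the duality~\eqref{du3}, applying Theorem~\ref{t33} to $M\ch$ to obtain $\hH^n(K,M)\simeq D(M\ch(-n)/pM\ch(-n))$; however, matching the explicit cocycles across the duality and tracing how $M\ch$ sits inside its own dual tube requires comparable bookkeeping, so I would pursue the direct mirror argument above.
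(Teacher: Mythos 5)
Your proposal is exactly the paper's intended route: the paper omits the proof of Theorem~\ref{t34}, stating that it simply repeats the proof of Theorem~\ref{t33}, and your mirrored argument (cocycle check in the negative part of $\hbP$, dimension count via Theorem~\ref{t26} and \eqref{erk}, base-case elimination mod $p^2$ for $T^f_1$ and $T^{\la i}_1$, then induction along the tube using Corollary~\ref{c32} and the five lemma) is precisely that. Only a minor indexing slip: a primitive of a degree-$n$ cochain lives on $\hP_{n-1}$, whose basis is $\{\hu^k\hv^{|n|-k}\}$ rather than $\{\hu^k\hv^{|n|-2-k}\}$, after which your elimination goes through as described.
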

  The proof just repeats that of Theorem~\ref{t33}, so we omit it.

\end{document}